\documentclass{article}
\usepackage[utf8]{inputenc}
\usepackage{enumerate}
\usepackage{amssymb, bm}
\usepackage{amsthm}
\usepackage{amsmath}
\usepackage{tikz-cd} 
\usepackage{bbold}
 \usepackage[all]{xy}
\title{On the Nakano vanishing theorem}
\author{Xiaojun WU}
\date{\today}
\newtheorem{mythm}{Theorem}
\newtheorem{mylem}{Lemma}

\newtheorem{mydef}{Definition}
\newtheorem{myrem}{Remark}
\setlength{\oddsidemargin}{3.5mm}
\setlength{\evensidemargin}{3.5mm}
\setlength{\textwidth}{15cm}
\setlength{\textheight}{24cm}
\setlength{\topmargin}{-2cm}

\begin{document}
\def\cI{\mathcal{I}}
\def\Z{\mathbb{Z}}
\def\Q{\mathbb{Q}}  \def\C{\mathbb{C}}
 \def\R{\mathbb{R}}
 \def\N{\mathbb{N}}
 \def\H{\mathbb{H}}
  \def\P{\mathbb{P}}
 \def\rC{\mathrm{C}}
  \def\Bs{\mathrm{Bs}}
  \def\d{\partial}
 \def\dbar{{\overline{\partial}}}
\def\dzbar{{\overline{dz}}}
 \def\ii{\mathrm{i}}
  \def\d{\partial}
 \def\dbar{{\overline{\partial}}}
\def\dzbar{{\overline{dz}}}
\def \ddbar {\partial \overline{\partial}}
\def\cK{\mathcal{K}}
\def\cE{\mathcal{E}}  \def\cO{\mathcal{O}}
\def\P{\mathbb{P}}
\def\cI{\mathcal{I}}
\def \loc{\mathrm{loc}}
\def \log{\mathrm{log}}
\def \cC{\mathcal{C}}
\bibliographystyle{plain}
\def \dim{\mathrm{dim}}
\def \RHS{\mathrm{RHS}}
\def \liminf{\mathrm{liminf}}
\def \ker{\mathrm{Ker}}
\def \Null{\mathrm{Null}}
\maketitle
\begin{abstract}
In this note, we state various generalisations of the Nakano vanishing theorem under weak positivity assumptions, and compare them with the known results.
\end{abstract}

\section{Introduction}
In this note, we give the following generalized version of the Nakano vanishing theorem.
\begin{mythm}
Let $X$ be a $n$-dimensional projective manifold and $L$ a nef holomorphic line bundle over $X$. Then we have
$$H^p(X, \Omega^q_X \otimes L)=0$$
for any $p+q > n+\max(\dim (B_+(L)),0)$.
Here $B_+(L)$ denotes the augmented base locus(or non-ample locus) of $L$. When $B_+(L)=\emptyset$, we define by convention that its dimension is $-1$.
\end{mythm}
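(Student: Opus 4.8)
\emph{Strategy.} I would deduce the theorem from a curvature positivity property of $L$ together with the Bochner--Kodaira--Nakano identity, in the spirit of the Andreotti--Grauert/Girbau vanishing theorem for $q$-positive line bundles. First the harmless reduction: if $L$ is not big, then $B_+(L)=X$, so $\max(\dim B_+(L),0)=n$ and the condition $p+q>2n$ is vacuous; hence assume $L$ nef and big, so that (by Nakamaye's theorem $B_+(L)=\mathrm{Null}(L)$) the set $V:=B_+(L)$ is a proper closed algebraic subvariety, say of dimension $d$ with $0\le d\le n-1$. A small but essential observation: in the range $p+q>n+d$ one necessarily has \emph{both} $p>d$ and $q>d$, since if, say, $q\le d$, then $p>n+d-q\ge n$, which is impossible. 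This is exactly what makes the eigenvalue count below work out at the threshold.

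\emph{The crucial step.} I expect the main difficulty to be the construction of a Hermitian metric $h$ on $L$ (smooth, or at worst smooth on $X\setminus V$ and suitably regularizable) such that $i\Theta(L,h)$ has at least $n-d$ positive eigenvalues at every point of $X$. Away from $V$ there is nothing to do, since $L$ is ample there; the content is to control the curvature \emph{along $V$}, where $L$ genuinely fails to be positive. Here I would invoke Nakamaye's theorem again: there is a modification $\mu\colon X'\to X$ which is an isomorphism over $X\setminus V$, with an effective divisor $E$ supported on $\mu^{-1}(V)$, such that $\mu^*L-t_0E$ is ample on $X'$ for $0<t_0\ll 1$. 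Combining an ample metric on $\mu^*L-t_0E$ with a singular metric on $\mathcal{O}_{X'}(E)$ of curvature $[E]$ produces a metric on $\mu^*L$ whose curvature current dominates a Kähler form, and one then has to transfer this down to $X$, checking that the positivity that $-E$ contributes in the (at least $(n-1)-d$) fibre directions of $\mu$ combines with positivity in the remaining directions to yield $n-d$ positive eigenvalues over each point of $V$. (Alternatively, push the construction forward directly to get a singular metric on $L$ with analytic singularities exactly along $V$ and curvature $\ge\varepsilon\omega_0$, and run the estimates below on $X\setminus V$ with cut-off functions; the error terms concentrated near $V$ turn out to be controlled precisely under the hypothesis $p+q-n>\dim V$, which is where the defect $\dim B_+(L)$ enters.)

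\emph{Conclusion via Bochner--Kodaira--Nakano.} Granted such an $h$, I would finish as follows. Identify $H^p(X,\Omega^q_X\otimes L)$ with the space of $\dbar$-harmonic $L$-valued forms of type $(q,p)$. For a harmonic $\alpha$, the Bochner--Kodaira--Nakano identity on the Kähler manifold $X$ gives $0=\lVert D'\alpha\rVert^2+\lVert D'^{*}\alpha\rVert^2+\langle\!\langle[i\Theta(L,h),\Lambda_\omega]\alpha,\alpha\rangle\!\rangle$ for any Kähler metric $\omega$, so it suffices to choose $\omega$ so that the Hermitian endomorphism $[i\Theta(L,h),\Lambda_\omega]$ is positive on forms of type $(q,p)$. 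Diagonalizing $i\Theta(L,h)$ with eigenvalues $\lambda_1\le\cdots\le\lambda_n$ (at least $n-d$ of them positive, so $\lambda_{d+1},\dots,\lambda_n>0$), its eigenvalue on the $(I,J)$-component with $|I|=q$, $|J|=p$ is $\sum_{i\in I}\lambda_i+\sum_{j\in J}\lambda_j-\sum_{k}\lambda_k$; rescaling the eigendirections of $\omega$ so that $\lambda_1,\dots,\lambda_d$ become arbitrarily small while $\lambda_{d+1}=\cdots=\lambda_n=1$, the smallest such eigenvalue is, to leading order, $(q-d)-(n-p)=p+q-n-d>0$ — here one uses precisely $p>d$ and $q>d$ so that the first $d$, now negligible, eigenvalues never exhaust either the $q$-subset or the complement of the $p$-subset. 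Hence $\alpha=0$ and $H^p(X,\Omega^q_X\otimes L)=0$. This last paragraph is exactly the Andreotti--Grauert/Girbau vanishing theorem for $(d+1)$-positive line bundles; its only subtle point, the globalization of the pointwise rescaling of $\omega$, is handled as in the standard proof of that theorem.
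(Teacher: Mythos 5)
Your concluding Bochner--Kodaira--Nakano paragraph is fine as far as it goes, but note that it requires a \emph{smooth semi-positive} metric $h$ on $L$ whose curvature has at least $n-d$ strictly positive eigenvalues at every point: the semi-positivity is what legitimizes making the remaining $d$ eigenvalues ``arbitrarily small'' by rescaling $\omega$ (this is Girbau's theorem); mere Andreotti--Grauert $(d+1)$-positivity, with honestly negative eigenvalues, only gives vanishing for $K_X\otimes L$, not for $\Omega^q_X\otimes L$. The genuine gap is therefore your ``crucial step'': no construction of such a metric is given, and the sketch offered cannot produce one. The modification $\mu\colon X'\to X$ with $\mu^*L-t_0E$ ample, combined with the singular metric of curvature $[E]$ on $\cO_{X'}(E)$, only yields a \emph{singular} metric on $L$, with analytic singularities along $V=B_+(L)$ and strictly positive curvature current; transferring it down to $X$ or regularizing it \`a la Demailly destroys semi-positivity (one only gets curvature $\geq-\varepsilon\omega$ with unbounded negative part near $V$), after which the eigenvalue count at points of $V$ no longer closes. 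It is already unclear (and false for nef non-big bundles, by the Demailly--Peternell--Schneider example) that such line bundles carry any smooth semipositive metric, let alone one with $n-\dim B_+(L)$ positive eigenvalues everywhere; your fallback --- cut-offs near $V$ with ``error terms controlled precisely under $p+q-n>\dim V$'' --- is exactly the hard, unproved assertion rather than a routine estimate. Indeed Ramanujam's example recalled in the introduction shows that a singular strictly positive metric with analytic singularities does not by itself yield vanishing in the range $p+q>n$; the refinement by the dimension of the singular locus is precisely what has to be proved, and no standard analytic technique delivers it.

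By contrast, the paper's proof avoids this metric problem entirely and is purely algebraic: it argues by induction on $l=\dim B_+(L)$, cuts by a general very ample divisor $D$, uses Nakamaye's theorem $B_+(L)=\Null(L)$ twice to ensure both that $L|_D$ is big and that $\dim B_+(L|_D)\leq l-1$, and then descends the vanishing from $D$ to $X$ via the Shiffman--Sommese lemma, whose hypotheses (a) and (c) are supplied by the classical Nakano vanishing theorem applied to the ample bundle $L+D$ (here nefness of $L$ is used). If you wish to salvage your strategy, you must first prove the existence of the smooth semipositive metric with $n-d$ positive eigenvalues; that statement is strictly stronger than the theorem itself and is not known, so as written the proposal does not constitute a proof.
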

Here we recall the definition of $B_+(L)$. Given an ample line bundle $A$ over $X$.
the augmented base locus is defined by
$$B_+(L):= \bigcap_{m>0}\mathrm{Bs} (mL-A)$$
where $\Bs$ means the base locus of a line bundle.

We recall classically (cf. \cite{BBP}) that $B_+(L) = \emptyset$ if and only if $L$ is ample and $B_+(L) \neq X$ if and only if $L$ is big.
Thus we have the Nakano vanishing theorem in the case that $B_+(L) = \emptyset$.

We notice that by the example of \cite{Ram}, we can not change the augmented base locus by the base locus. In his example, we take $X$ the blow up of $\P^3$ at one point and $L$ the pull back of $\cO_{\P^3}(1)$ under the blow up. 
Thus $L$ is a big and nef line bundle with $\bigcap_{m>0} \Bs(mL) = \emptyset$.
But by calculation of cohomology class we can show that
$$H^2(X, \Omega^2_X \otimes L) \neq 0.$$
We observe that in this example $B_+(L)=E$ where $E$ is the exceptional divisor.

Now, we return to the proof of the theorem. We argue by induction on the dimension of $B_+(L)$ and apply of the Nakamaye theorem.
First note that we can assume $L$ big,
otherwise $B_+(L)=X$ and the theorem is void.

Let $l:= \dim (B_+(L))$. When $l = -1$, the theorem is true by the Nakano vanishing theorem. 
When $l\leq 0$, we show that in fact $L$ is ample. 
In this case, there exists some $m >0$ and $s_0, \cdots, s_k \in H^0(X, mL-A)$ such that 
$$\Bs(s_0, \cdots, s_k)=\{x_0, \cdots, x_l\}.$$
These sections induce a singular metric $h_0$ on $mL-A$ with analytic singularity at the discrete points $\{x_0, \cdots, x_l\}$. Its curvature is a closed positive (1,1)-current which is smooth outside $\{x_0, \cdots, x_l\}$. 
By \cite{Dem92} Lemma 6.3 $mL-A$ is nef. Hence $L$ is ample.

Now let $l>0$ and suppose by induction that the theorem has be verified for $\dim (B_+(L)) \leq l-1$. We recall the concepts involved in the theorem of Nakamaye on base loci \cite{Nak}.
\begin{mydef}
Given a nef and big divisor $L$ on $X$, the null locus $\Null(L)$ of $L$ is the union of all positive dimensional subvarieties $V \subset X$ with
$$(L^{\dim V} \cdot V)=0.$$
\end{mydef}
We observe that for any smooth divisor $D$ of $X$ and such a line bundle,
$$\Null(L|_D) \subset \Null(L).$$
\begin{mythm} {\rm (Nakamaye)}.
If $L$ is an arbitrary nef and big divisor on $X$, then
$$B_+(L)=\Null(L).$$
\end{mythm}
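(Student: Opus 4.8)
The plan is to establish the two inclusions $\Null(L)\subseteq B_+(L)$ and $B_+(L)\subseteq\Null(L)$ separately; the first is elementary, while the second is the real content of Nakamaye's theorem.

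For $\Null(L)\subseteq B_+(L)$, fix an ample divisor $A$. For $m$ sufficiently divisible the closed sets $\Bs(|mL-A|)$ form a decreasing sequence, hence (by noetherianity) stabilise, and the stable value is $B_+(L)$; in particular $B_+(L)=\Bs(|m_0L-A|)$ for some $m_0$. Now let $V$ be an irreducible positive-dimensional subvariety with $V\not\subseteq B_+(L)$. Then $V$ is not contained in $\Bs(|m_0L-A|)$, so some section of $m_0L-A$ restricts to a nonzero section on $V$; hence $m_0L|_V$ is, numerically, the sum of an ample and an effective divisor, so it is big on $V$. Since $L|_V$ is also nef, $(L^{\dim V}\cdot V)=\mathrm{vol}(L|_V)>0$, so $V\not\subseteq\Null(L)$. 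The contrapositive gives the inclusion. (One also uses the standard independence of $B_+(L)$ from the choice of $A$.)

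For $B_+(L)\subseteq\Null(L)$: since $B_+(L)$ is Zariski closed it suffices to show, one point at a time, that $x\notin\Null(L)$ --- i.e. every positive-dimensional subvariety $V\ni x$ satisfies $(L^{\dim V}\cdot V)>0$ --- implies $x\notin B_+(L)$, i.e. that there are $m>0$ and $s\in H^0(X,mL-A)$ with $s(x)\neq 0$. The strategy is to produce such a section by a Kawamata--Viehweg vanishing argument: on a suitable birational modification $\mu\colon X'\to X$ obtained by blowing up $x$ (and, in the refined version, suitable subvarieties through $x$), one writes the pull-back of $mL$, corrected by $\mu^*(K_X+A)$ and a combination of the exceptional divisors, in the form $K_{X'}+(\text{nef and big})$; then $H^1$ of the corresponding twisted ideal sheaf vanishes, and pushing forward yields a section of $mL-A$ not vanishing at $x$. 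The positivity needed to invoke the vanishing theorem must be extracted entirely from the hypothesis $x\notin\Null(L)$.

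That last step is the heart of the matter and the main obstacle. The naive implementation --- blow up the single point $x$ and use the Seshadri constant $\epsilon(L;x)$ --- fails, since $\epsilon(L;x)$ may vanish even when $x\notin\Null(L)$. One must instead blow up along subvarieties and argue by induction on dimension, at each stage feeding in the strict positivity of $L$ on the subvariety in question and controlling the multiplicities of the exceptional divisors so that the auxiliary divisor stays nef and big. An equivalent packaging uses restricted volumes: Fujita approximation gives $\mathrm{vol}_{X|V}(L)=(L^{\dim V}\cdot V)$ for $V\not\subseteq B_+(L)$, which one combines with the characterisation $\mathrm{vol}_{X|V}(L)>0\iff V\not\subseteq B_+(L)$. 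In all approaches the crux is the passage from the global numerical non-triviality of $L$ along every subvariety through $x$ to the local positivity that makes a vanishing-theorem argument go through.
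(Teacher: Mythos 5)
Your first inclusion $\Null(L)\subseteq B_+(L)$ is essentially fine: using that $B_+(L)=\Bs(m_0L-A)$ for suitable $m_0$, a subvariety $V\not\subseteq B_+(L)$ carries a nontrivial section of $m_0L-A$, so $L|_V$ is nef and big and hence $(L^{\dim V}\cdot V)>0$. But the reverse inclusion $B_+(L)\subseteq\Null(L)$ — which is the entire content of Nakamaye's theorem — is not proved. What you give is a description of a strategy: blow up, write a twist of $\mu^*(mL)$ as $K_{X'}$ plus something nef and big, apply Kawamata--Viehweg, push forward to get a section of $mL-A$ not vanishing at $x$. You then correctly observe that the naive version via the Seshadri constant $\epsilon(L;x)$ fails and that one must instead run an induction over subvarieties through $x$ with carefully controlled multiplicities on the exceptional divisors; but that induction, which is where all the difficulty is concentrated, is never carried out. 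Nowhere is it shown how the purely numerical hypothesis that every positive-dimensional $V\ni x$ has $(L^{\dim V}\cdot V)>0$ produces a concrete auxiliary divisor on a concrete modification satisfying the positivity needed for the vanishing theorem and whose pushforward frees $x$ from $\Bs(mL-A)$. The alternative ``packaging'' you mention is circular as stated: the characterisation of $B_+(L)$ via restricted volumes and the Fujita-type identity $\mathrm{vol}_{X|V}(L)=(L^{\dim V}\cdot V)$ for $V\not\subseteq B_+(L)$ are themselves theorems whose proofs contain (or reprove) Nakamaye's result. So, as it stands, this is an annotated outline of the known proofs rather than a proof.

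For context, the paper itself does not prove this statement: it is quoted from \cite{Nak} and used as a black box in the induction for the first theorem, so there is no in-paper argument to compare your route against. Judged on its own terms, your attempt establishes only the elementary inclusion $\Null(L)\subseteq B_+(L)$; to make it a genuine proof you would need to supply the full inductive construction (in the spirit of Nakamaye's original argument or of the restricted-volume approach, proved from scratch), i.e.\ the passage from numerical positivity of $L$ on all subvarieties through $x$ to the effective production of sections of $mL-A$ non-vanishing at $x$.
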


Fix $A_2$ a very ample divisor on $X$. By Bertini theorem with a general choice we can assume that $D \in |A_2|$ is smooth.
Since $A_2$ is very ample we can assume that $D \cap B_+(L) \subsetneq B_+(L)$.
More precisely, for a general choice of $D$, no $l-$dimensional component of $ B_+(L)$ is contained in $D$.
Since $L$ is nef and big, we have by Nakayame theorem $\Null(L)=B_+(L)$.
By the definition of $\Null(L)$ we have
$$(L^{n-1} \cdot D)>0.$$
In other words, $L|_D$ is big.
Using another time the Nakamaye theorem, we find that
$$B_+(L|_D)=\Null(L|_D) \subset \Null(L) \cap D \subsetneq B_+(L).$$
In particular, $\dim B_+(L|_D) \leq \dim B_+(L)-1$.

Recall the following elementary lemma (3.24) in \cite{SS}.
\begin{mylem}
Let $L$ be a holomorphic line bundle over $X$, let $D$ be a smooth hyper-surface in $X$, and let $p,q \geq 0$ be fixed. If
$$(a) H^p(X, \Omega_X^q \otimes [D] \otimes L)=0,$$
$$(b) H^{p-1}(D, \Omega_D^{q-1} \otimes L|_D)=0,$$
$$(c) H^{p-1}(D, \Omega_D^{q} \otimes ([D]\otimes L)|_D)=0,$$
then we have
$$ H^{p}(X, \Omega_X^{q} \otimes L)=0.$$
\end{mylem}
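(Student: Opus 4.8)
The plan is a short diagram chase using the two canonical short exact sequences attached to the smooth divisor $D$, which is the standard mechanism behind adjunction-type vanishing statements. First I would reduce the assertion on $X$ to a vanishing statement on $D$, and then settle that with the conormal sequence.

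Step one: peel off $D$. Tensoring the structure sequence $0 \to \cO_X(-D) \to \cO_X \to \cO_D \to 0$ with the locally free sheaf $\Omega_X^q \otimes [D] \otimes L$, and using $\cO_X(-D) \otimes [D] \cong \cO_X$, gives
$$0 \to \Omega_X^q \otimes L \to \Omega_X^q \otimes [D] \otimes L \to \big(\Omega_X^q \otimes [D] \otimes L\big)\big|_D \to 0 .$$
The segment of the associated long exact cohomology sequence
$$H^{p-1}\big(D, (\Omega_X^q \otimes [D] \otimes L)|_D\big) \to H^p(X, \Omega_X^q \otimes L) \to H^p(X, \Omega_X^q \otimes [D] \otimes L)$$
has vanishing right-hand term by hypothesis (a), so it remains to prove $H^{p-1}\big(D, (\Omega_X^q \otimes [D] \otimes L)|_D\big) = 0$.

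Step two: analyze the restriction via adjunction. I would use the conormal exact sequence $0 \to N_{D/X}^\ast \to \Omega_X^1|_D \to \Omega_D^1 \to 0$ with $N_{D/X}^\ast \cong [D]^{-1}|_D$. Since the sub-bundle is a line bundle, its $q$-th exterior power is again a short exact sequence
$$0 \to \Omega_D^{q-1} \otimes N_{D/X}^\ast \to \Omega_X^q|_D \to \Omega_D^q \to 0 ,$$
and tensoring with $([D] \otimes L)|_D$, together with $N_{D/X}^\ast \otimes [D]|_D \cong \cO_D$, turns it into
$$0 \to \Omega_D^{q-1} \otimes L|_D \to \big(\Omega_X^q \otimes [D] \otimes L\big)\big|_D \to \Omega_D^q \otimes \big([D] \otimes L\big)\big|_D \to 0 .$$
Now the relevant portion of the long exact sequence sandwiches $H^{p-1}\big(D, (\Omega_X^q \otimes [D] \otimes L)|_D\big)$ between $H^{p-1}(D, \Omega_D^{q-1} \otimes L|_D)$, which vanishes by (b), and $H^{p-1}(D, \Omega_D^q \otimes ([D] \otimes L)|_D)$, which vanishes by (c). Hence that middle group vanishes, and combined with step one we obtain $H^p(X, \Omega_X^q \otimes L) = 0$.

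I do not expect a genuine obstacle: the whole content is the bookkeeping of these two sequences and the two canonical identifications $\cO_X(-D) \otimes [D] \cong \cO_X$ and $N_{D/X}^\ast \otimes [D]|_D \cong \cO_D$. The one point to state with care is that the conormal sequence has a line bundle as its sub-sheaf, so that taking $q$-th wedge powers yields an honest short exact sequence and not merely a two-step filtration; this is exactly where rank one is used. The degenerate values cause no trouble: if $p = 0$ the negative-degree groups in the first long exact sequence vanish and (a) alone suffices, while if $q = 0$ the conormal step is trivial since $\Omega_X^0|_D = \Omega_D^0 = \cO_D$ and hypothesis (c) already provides what is needed, with (b) vacuous in both cases.
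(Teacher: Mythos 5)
Your argument is correct: the restriction sequence $0 \to \Omega_X^q \otimes L \to \Omega_X^q \otimes [D] \otimes L \to (\Omega_X^q \otimes [D] \otimes L)|_D \to 0$ reduces everything, via (a), to the vanishing of $H^{p-1}(D,(\Omega_X^q \otimes [D] \otimes L)|_D)$, which is then squeezed by (b) and (c) through the $q$-th wedge of the conormal sequence, and your remark that rank one of $N^\ast_{D/X}$ is what makes that wedge a short exact sequence rather than a filtration is exactly the right point of care. The paper does not prove this lemma but quotes it as Lemma 3.24 of Shiffman--Sommese; your proof is the standard one behind that reference, and it uses precisely the same two exact sequences that the paper itself deploys explicitly in the proof of its second theorem (the sequence $0 \to \cO(-H_1)|_{H_1} \otimes \Omega^{p-1}_{H_1} \to \Omega^p_X|_{H_1} \to \Omega^p_{H_1} \to 0$ and the restriction sequence), so there is nothing to add.
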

Since $[D] \otimes L$ is ample ($L$ is nef), the hypotheses (a) (c) of the lemma is verified by the Nakano vanishing theorem.
Since
$$(p-1)+(q-1) > \dim D+l-1,$$
the condition (b) is satisfied by the inductive hypothesis.

This finishes the proof.
\begin{myrem}
{\rm
It would be interesting to know whether the theorem is still valid without assuming $L$ to be nef. 
Here principally, we use the nef condition in two places: in the Nakamaye theorem and in the fact that the sum of an ample divisor and a nef divisor is ample.
}
\end{myrem}

Here, following some ideas of Demailly, we give the following more general version of the Nakano vanishing theorem.
\begin{mythm}
Let $X$ be a $n$-dimensional projective manifold, $L$ a holomorphic line bundle and $A$ an ample line bundle over $X$. Assume that for sufficiently large $m \in \N$ and general hyper-surfaces in the linear system $H_1, \cdots, H_k \in |mA|$, the restriction $L|_{H_1 \cap \cdots \cap H_k}$ is ample.
Then for $p+q > n$, we have
$$H^q(X, \Omega^p_X \otimes L)=0.$$
\end{mythm}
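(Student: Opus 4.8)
The natural approach is to argue by induction on $k$, in the spirit of the proof of Theorem~1, with the complete intersection hypothesis replacing the control on the augmented base locus. When $k=0$ the hypothesis says that $L$ itself is ample, and the conclusion is exactly the Nakano vanishing theorem; so assume $k\geq 1$ and that the statement is already known, in every dimension, for $k-1$.

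\emph{Reduction to a hyperplane section.} Fix $N\gg 0$ so that $|NA|$ is very ample and let $D=H_k\in|NA|$ be general; by Bertini $D$ is a smooth connected hypersurface with $\dim D=n-1$ and $\mathcal{O}_X(D)=NA$. The first step is to check that $(D,L|_D,A|_D)$ again satisfies the hypothesis of the theorem, now with $k-1$. By Serre vanishing $H^1(X,(m-N)A)=0$ for $m\gg N$, so $H^0(X,mA)\to H^0(D,mA|_D)$ is surjective and a general $(k-1)$-tuple of $|m(A|_D)|$ is of the form $(H_1\cap D,\dots,H_{k-1}\cap D)$ with $H_1,\dots,H_{k-1}\in|mA|$ general; for such a tuple
$$(L|_D)\big|_{(H_1\cap D)\cap\cdots\cap(H_{k-1}\cap D)}=L\big|_{H_1\cap\cdots\cap H_{k-1}\cap D}$$
is ample, since $H_1\cap\cdots\cap H_{k-1}\cap D$ is a general complete intersection of $k$ sufficiently ample divisors on $X$, to which the hypothesis on $X$ applies. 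Hence, by the inductive hypothesis,
$$H^{b}\bigl(D,\Omega_D^{a}\otimes L|_D\bigr)=0\qquad\text{for all }a+b>n-1.$$

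\emph{Descending via Lemma~1.} Now fix $p,q$ with $p+q>n$ and apply Lemma~1 on $X$ to the hypersurface $D$ and the line bundle $L$, with the roles of $p$ and $q$ exchanged, so that its conclusion reads $H^q(X,\Omega_X^p\otimes L)=0$. Since $[D]\otimes L=NA+L$ is ample for $N$ large, its hypothesis (a), $H^q(X,\Omega_X^p\otimes(NA+L))=0$, follows from the Nakano vanishing theorem on $X$ (using $p+q>n$), and its hypothesis (c), $H^{q-1}(D,\Omega_D^p\otimes(NA+L)|_D)=0$, follows from the Nakano vanishing theorem on $D$ (using $p+(q-1)>n-1=\dim D$). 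The remaining hypothesis (b) is $H^{q-1}(D,\Omega_D^{p-1}\otimes L|_D)=0$, which by the vanishing just displayed on $D$ holds as soon as $(p-1)+(q-1)>n-1$, i.e. $p+q>n+1$.

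\emph{The main obstacle.} The delicate point is the borderline range $p+q=n+1$: there $(p-1)+(q-1)$ equals $\dim D$ exactly, which lies just outside the scope of the inductive hypothesis on $D$, and this range cannot be widened in general — already for an ample line bundle $M$ on an abelian variety $A$ one has $H^0\bigl(A,\Omega_A^{\dim A}\otimes M\bigr)=H^0(A,M)\neq 0$. Treating this case seems to require an argument outside the Shiffman--Sommese formalism, for instance a direct Bochner--Kodaira--Nakano estimate for $L$-valued $(p,q)$-forms using a singular metric on $L$ that is smooth and strictly positive off the base locus of a general member of the family $\{H_1\cap\cdots\cap H_k\}$ (a set of dimension $<n-k$), in the spirit of Demailly's handling of partial positivity. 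Constructing such a metric from the complete intersection hypothesis and controlling the resulting error terms is where I expect the real difficulty, and it is conceivable that doing so forces a numerical hypothesis stronger than $p+q>n$.
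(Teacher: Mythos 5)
Your diagnosis of the borderline range is exactly right, but the conclusion to draw is that the numerical range in the statement is at fault, not your method. The paper's own proof does not establish vanishing for $p+q>n$: its very first step passes by Serre duality to proving $H^q(X,\Omega^p_X\otimes L^{-1})=0$ for $p+q<n-k$, and the descending chain of restriction and conormal sequences through $H_1$, $H_1\cap H_2$, \dots\ terminates with Nakano on the $(n-k)$-dimensional complete intersection, which is precisely why only that range is accessible; dualizing back, what is actually proved is $H^q(X,\Omega^p_X\otimes L)=0$ for $p+q>n+k$. This is the range consistent with K\"uronya's theorem quoted afterwards, with Sommese's vanishing for $k$-ample bundles, and with Theorem 1 being the special case $k=\dim B_+(L)$. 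Moreover, the cases $n<p+q\leq n+k$ that block your induction are genuinely false rather than merely out of reach: in Ramanujam's example recalled in the paper ($X$ the blow-up of $\P^3$ at a point, $L$ the pull-back of $\cO_{\P^3}(1)$), the hypothesis holds with $k=2$ because $L$ has positive degree on a general complete-intersection curve, yet $H^2(X,\Omega^2_X\otimes L)\neq 0$ although $2+2>3$. So no Bochner--Kodaira refinement can rescue the stated range, and your closing suspicion that a stronger numerical hypothesis is forced is correct. (Your abelian-variety remark concerns $p+q=n$ rather than $p+q>n$, so it does not by itself decide the question, but the conclusion you drew from it is the right one.)

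Once the range is corrected to $p+q>n+k$, your induction on $k$ closes with no new idea: strengthen the inductive claim on $D$ to $H^b(D,\Omega^a_D\otimes L|_D)=0$ for $a+b>(n-1)+(k-1)$; then hypothesis (b) of the Shiffman--Sommese lemma requires exactly $p+q>n+k$, while (a) and (c) remain instances of Nakano for the ample bundle $NA+L$, valid as soon as $p+q>n$. Your reduction of the hypothesis from $X$ to $D$ (Serre vanishing plus surjectivity of restriction) is fine. The resulting argument is essentially the Serre dual of the paper's: the Shiffman--Sommese lemma packages the same two exact sequences (restriction to $D$ and the conormal sequence $0\to\cO(-D)|_D\otimes\Omega^{p-1}_D\to\Omega^p_X|_D\to\Omega^p_D\to 0$) that the paper applies on the $L^{-1}$ side, the only difference being that the paper descends through $H_1,\dots,H_k$ directly rather than organizing the descent as an induction on $k$.
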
 
\begin{proof}
By duality, it is equivalent to show that for $p+q < n-k$,we have
$$H^q(X, \Omega^p_X \otimes L^{-1})=0.$$
Since the hyper-surface $H_i$ is supposed to be general, we can assume that any intersection of type $H_1 \cap \cdots \cap H_l$ is smooth for any $l$ and of dimension $n-l$ for any $l \leq k$.

For $m$ big enough such that $mA+L$ is ample, hence by Nakano vanishing theorem we have the vanishing $p+q < n-k$
$$H^q(X, \Omega^p_X \otimes L^{-1} \otimes \cO(-H_1))=0.  $$
From the short exact sequence
$$0 \to \Omega^p_X \otimes L^{-1} \otimes \cO(-H_1) \to \Omega^p_X \otimes L^{-1} \to (\Omega^p_X \otimes L^{-1})|_{H_1} \to 0$$
we know that to prove the desired vanishing it is enough to show that for $p+q < n-k$
$$H^q(X, (\Omega^p_X \otimes L^{-1})|_{H_1})=0.$$
From the short exact sequence
$$0 \to T_{H_1} \to T_X|_{H_1} \to \cO(H_1)|_{H_1} \to 0$$
we have the exact sequence (using the fact that $\cO(H_1)$ is of rank one)
$$0 \to \cO(-H_1)|_{H_1} \otimes \Omega^{p-1}_{H_1} \to \Omega^p_X|_{H_1} \to \Omega^p_{H_1} \to 0.$$
We take the tensor product with $L^{-1}|_{H_1}$ and the long exact sequence associated to the coreesponding short exact sequence.
By the Nakano vanishing theorem, we find
$$H^i(H_1, \Omega^j_{H_1} \otimes (L^{-1} \otimes \cO(-H_1))|_{H_1})=0$$
for any $i+j <n-1$.
It is enough to prove that
$$H^q(H_1, (\Omega^p_{H_1} \otimes L^{-1}|_{H_1})=0$$
for $p+q <n-k$.

We continue this process and change $X$ with $H_1$, then $H_1$ with $H_1 \cap H_2$ etc. Taking from the beginning $m$ so big that $mA+L$ is ample, we get for every $l$ that $mA+L|_{H_1 \cap \cdots \cap H_l}$ is ample on $H_1 \cap \cdots \cap H_l$. 
Hence in each step, we can use the Nakano vanishing theorem.
Finally, we are reduced to proving that
$$H^q(H_1 \cap \cdots \cap H_k, \Omega^p_{H_1 \cap \cdots \cap H_k} \otimes L^{-1}|_{H_1 \cap \cdots \cap H_k})=0$$
for $p+q <n-k$.
But this is true by the Nakano vanishing theorem and our assumption.
\end{proof}
\begin{myrem}
{\rm
By the proof of the theorem, it is enough to take $m$ so large that $mA+L$ is ample, and $H_i \in |mA|$ so that $H_1 \cap \cdots \cap H_l$ is smooth and of dimension $n-l$ for any $l \leq k$, and $L|_{H_1 \cap \cdots \cap H_k}$ is ample.
}
\end{myrem}
As pointed out by A. H\"oring, it is interesting to compare this result to the following theorem 2 of \cite{Kur13}:

Let $X$ be a smooth projective variety, $L$ a divisor, $A$ a very ample
divisor on $X$. If $L|_{E_1 \cap \cdots \cap E_k}$ is big and nef for a general choice of $E_1, \cdots , E_k$, then
$H^i (X,\cO_X(K_X + L)) = 0$ for $i > k$.
\begin{myrem}
{\rm
Our first theorem is a special case of this general version.
Since $L$ is nef, it is nef on the complete intersection of the hyper-surfaces $H_1, \cdots, H_l$ where $l := \dim(B_+(L))$.
On the other hand, for such general hyper-surfaces, we can assume that the intersection $B_+(L) \cap H_1 \cap \cdots \cap H_l$ is finite points.
By the definition of stable base locus, $L|_{H_1 \cap \cdots \cap H_l}$ is ample outside these finite points.
Hence in fact,  $L|_{H_1 \cap \cdots \cap H_l}$ is ample.}
\end{myrem}
The $k$-ampleness condition defined by Sommese \cite{Som} is also a sufficient condition for the condition stated in Theorem 3.30. 
We start by recalling the definition.
\begin{mydef}
A holomorphic line bundle $L$ on a compact complex manifold $X$ is said to be $k$-ample $(0 \leq k \leq n-1)$ if there exists a positive integer $N$ such that $NL$ spans at each point of $X$ and the Kodaira morphism associated to $NL$ has at most $k$-dimensional fibres.
\end{mydef}
Changing $N$ in the definition by a possible large multiple of $N$ we can assume that the Kodaira morphism associated to $NL$ is the Iitaka fibration.
Denote $\Phi: X \to Z$ the fibration where $Z$ is a projective variety. 
Denote $A_{z,j}$ ($z \in Z, j \in \N$) the irreducible components of the fibre of $z$ (i.e. $\Phi^{-1}(z)$).
By a general choice of $H_1$, we can assume that for any $z,j$ the hyper-surface $H_1$ intersecting $A_{z,j}$ defines a divisor of $A_{z,j}$ by the lemma stated below.
Similarly, with a general choice of $H_1, \cdots, H_k$ we can assume that for any $z,j$ $H_1 \cap \cdots \cap H_k \cap A_{z,j}$ is a finite set, by the assumption that $\dim A_{z,j} \leq k$.
In other words, the restriction of the Kodaira morphism
$$\Phi: H_1 \cap \cdots \cap H_k \to Z$$
is a finite morphism. 
Since $L|_{H_1 \cap \cdots \cap H_k}$ is pull back of $\cO(1)$ via $\Phi$, $L|_{H_1 \cap \cdots \cap H_k}$ is ample on ${H_1 \cap \cdots \cap H_k}$.
(Recall that the pull back of an ample line bundle under a finite morphism is ample.)
\begin{mylem}
Let $\Phi: X \to Z$ be the fibration such that all the fibers have dimension $\le k$.
Assume $X$ is projective.
Then there exists $H \subset X$ a general very ample divisor such that the restriction $\Phi_H: H \to Z$ of $\Phi$ on $H$ has all fibers of dimension $\le (k-1)$.
\end{mylem}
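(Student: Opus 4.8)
The plan is to take for $H$ a general member of the complete linear system $|mH_0|$, where $H_0$ is a fixed very ample divisor on $X$ and the multiple $m$ is chosen large only at the very end (we may assume $k\ge 1$, the case of interest). I begin with a soft reduction. For $z\in Z$ write $\Phi^{-1}(z)=\bigcup_i C_i$ as the union of its irreducible components, each of dimension $\le k$ by hypothesis. If $C_i\not\subseteq H$ then $H\cap C_i$ is a proper closed subset of the irreducible variety $C_i$, so $\dim(H\cap C_i)\le\dim C_i-1\le k-1$; if $C_i\subseteq H$ then $H\cap C_i=C_i$. Since $\Phi_H^{-1}(z)=H\cap\Phi^{-1}(z)=\bigcup_i(H\cap C_i)$, the fibre $\Phi_H^{-1}(z)$ has dimension $\le k-1$ for every $z\in Z$ if and only if $H$ contains no $k$-dimensional irreducible component of any fibre of $\Phi$. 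It therefore suffices to produce an $H$ with this last property.

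The second, and technically central, step is to organise these components into a bounded family. By the upper semicontinuity of fibre dimension (Chevalley), the locus $T:=\{x\in X:\dim_x\Phi^{-1}(\Phi(x))\ge k\}$ is Zariski closed; since all fibres have dimension $\le k$, $T$ is set-theoretically the union of the $k$-dimensional components of the fibres, and as $T\to\overline{\Phi(T)}$ has all fibres of dimension $\le k$ we get $\dim T\le\dim Z+k$. By generic flatness together with Noetherian induction on $Z$, the scheme-theoretic fibres $\Phi^{-1}(z)$ take only finitely many Hilbert polynomials with respect to $H_0$, so their irreducible components have bounded $H_0$-degree; hence the $k$-dimensional reduced fibre-components are the members $\{C_t\}_{t\in\mathcal T}$ of the universal family $\mathcal C\to\mathcal T$ over a finite-type locally closed subscheme $\mathcal T$ of a Hilbert scheme of $X$. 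Here $\mathcal C\to\mathcal T$ is equidimensional of relative dimension $k$, while the evaluation morphism $\mathcal C\to X$ has image $T$ and finite fibres (a point lies on only finitely many components of its own fibre), so $\dim\mathcal T=\dim\mathcal C-k=\dim T-k\le\dim Z$.

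Now the dimension count. Each $C_t$ is an irreducible projective variety of dimension $k\ge 1$; choosing a general projection $C_t\to\mathbb{P}^k$ cut out by $k+1$ sections of $H^0(X,H_0)$ (which is a dominant morphism since $k+1$ general hyperplanes miss the $k$-dimensional $C_t$), the $\binom{m+k}{k}$ degree-$m$ monomials in these sections restrict to $C_t$ to give $\binom{m+k}{k}$ linearly independent sections of $\mathcal{O}_{C_t}(mH_0)$ lying in the image of $H^0(X,mH_0)$. Hence the divisors of $|mH_0|$ containing $C_t$ form a linear subsystem of codimension $\ge\binom{m+k}{k}$, and the closed incidence variety $\mathcal I:=\{(t,D)\in\mathcal T\times|mH_0|:C_t\subseteq D\}$ has dimension at most $\dim\mathcal T+\dim|mH_0|-\binom{m+k}{k}\le\dim Z+\dim|mH_0|-\binom{m+k}{k}$. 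Fixing $m$ with $\binom{m+k}{k}>\dim Z$ — for instance $m\ge\dim Z$ — the image of $\mathcal I$ under the second projection is a proper closed subset of $|mH_0|$, and by the first step any $H$ outside it has $\Phi_H\colon H\to Z$ with all fibres of dimension $\le k-1$.

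The main obstacle is the boundedness assertion of the second step: promoting the a priori merely set-indexed collection of $k$-dimensional fibre-components to an honest finite-type parameter space $\mathcal T$ that is quasi-finite over $Z$. Granting that, the reduction and the final codimension estimate are routine. It is worth noting why one cannot simply work in $|H_0|$ itself: a very ample divisor can perfectly well contain a positive-dimensional subvariety, so one really needs the codimension $\binom{m+k}{k}$ of the condition $C_t\subseteq D$ to exceed $\dim Z$, which forces a large multiple when $\dim Z\ge k+1$.
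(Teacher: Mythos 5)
Your proof is correct and follows essentially the same strategy as the paper: reduce to asking that $H$ contain no $k$-dimensional fibre component, parametrize these components by a bounded family of dimension at most $\dim Z$ in the Hilbert scheme, and take $m$ large enough that the codimension in $|mH_0|$ (resp.\ in $H^0(X,mA)$) of the condition of containing a fixed component exceeds $\dim Z$. The only variation is in one sub-step: you obtain the codimension bound $\binom{m+k}{k}$ by pulling back degree-$m$ monomials under a general projection $C_t\to\P^k$, whereas the paper gets $h^0(A_{z,j},mA)\geq\binom{m}{\dim A_{z,j}}$ from $1$-jet generation combined with uniform surjectivity of the restriction $H^0(X,mA)\to H^0(A_{z,j},mA)$ over the bounded family — your variant even dispenses with that surjectivity step.
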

\begin{proof}
Denote $A_{z,j}$ ($z \in Z, j \in \N$) the irreducible components of the fibre of $z$ (i.e. $\Phi^{-1}(z)$).
It is equivalent to demand the restriction to each $A_{z,j}$ of the defining section $\sigma$ of $H$ is non trivial.
Let $A$ be an ample divisor on $X$.
Denote $V_{z,j}$ the linear subspace of $H^0(X, mA)$ such that $\sigma|_{A_{z,j}} \equiv 0$.
We want to choose $\sigma$ such that $\sigma \in H^0(X, mA) \smallsetminus \bigcup_{z,j} V_{z,j}$. 
Notice that the family $A_{z,j}$ parametrized by $z,j$ forms a bounded family in the Hilbert scheme of $X$. A sufficient condition to find $\sigma$ as above is that for $m$ large enough
$$\dim Z +\dim V_{z,j} < h^0(X, mA).$$
Without loss of generality, we can assume that $A$ is very ample on $X$. 
Hence, by boundedness, we have for $m$ large enough independent of $z,j$ a
surjective restriction morphism
$$H^0(X, mA) \to H^0(A_{z,j}, mA).$$
As $V_{z,j}$ is the kernel of this morphism, it is enough to take $m$
so large that
$$\dim Z < h^0(A_{z,j}, mA).$$
For $A_{z,j}$ with positive dimension, the regular part of $A_{z,j}$ is a smooth submanifold of $X$. 
Since $A$ is very ample, it generates $1$-jets of the regular part of $A_{z,j}$ at any point.
Hence $H^0(A_{z,j}, NA)$ generates any $m$-fold symmetric product of $1$-jets of $A_{z,j}$ at some regular point.
In other words,
$$h^0(A_{z,j}, mA) > {{m}\choose{\dim A_{z,j}}} \geq m.$$
\end{proof}
\textbf{Acknowledgement} I thank Jean-Pierre Demailly, my PhD supervisor, for his guidance, patience and generosity. 
I would like to thank Andreas Höring for some very useful suggestions on the previous draft of this work.
I would also like to express my gratitude to colleagues of Institut Fourier for all the interesting discussions we had. This work is supported by the PhD program AMX of \'Ecole Polytechnique and Ministère de l'Enseignement Supérieur et de la Recherche et de l’Innovation, and the European Research Council grant ALKAGE number 670846 managed by J.-P. Demailly.
  
\end{document}